\theoremstyle{plain}
\newtheorem{theorem}{Theorem}
\newtheorem{lemma}[theorem]{Lemma}
\newtheorem{proposition}[theorem]{Proposition}
\theoremstyle{definition}
\newtheorem{definition}{Definition}[section]
\theoremstyle{remark}
\newtheorem*{remark}{Remark}
\numberwithin{equation}{section}
\newcommand\dr{\mathbb}
\renewcommand\d{\textnormal}
\newcommand\st{\textrm{ such that }}
\newcommand\G{\Gamma}
\newcommand\g{\gamma}
\newcommand\GG{{G / \G}}
\newcommand\GH{{H  \backslash G}}
\renewcommand\phi{\varphi}
\title[]{A brief remark on orbits of $\d{SL}(2,\dr Z)$ in the euclidean plane}
\author{Antonin Guilloux}
\address{UMPA - ENS Lyon\\46 All\'ee d'Italie\\FR - 69007 LYON\\
http://www.umpa.ens-lyon.fr/$\sim$aguillou/\\ antonin.guilloux@umpa.ens-lyon.fr}
\begin{document}

\maketitle

F. Ledrappier \cite{ledrappier} proved the following theorem as an application of Ratner theorem on unipotent flows (A. Nogueira \cite{nogueira} proved it for $\d{SL}(2,\dr Z)$ with different methods):
\begin{theorem}[Ledrappier, Nogueira]\label{the:led}
Let $\G$ be a lattice of $\d{SL}(2,\dr R)$ of covolume $c(\G)$, $\|.\|$ the euclidean norm on the algebra of $2\times 2$-matrices $\mathcal M(2,\dr R)$, and $v\in \dr R^2$ with non-discrete orbit under $\G$.

Then we have the following limit, for all $\phi\in \mathcal C_c(\dr R^2\setminus\{0\})$:
$$\frac{1}{T}\sum_{\g\in \G,\; \|\g\|\leq T} \phi(\g v)\xrightarrow{T\to \infty} \frac{1}{|v|c(\G)}\int_{\dr R^2\setminus\{0\}} \phi(w) \frac{dw}{|w|} \; .$$
\end{theorem}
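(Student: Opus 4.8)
The plan is to convert the weighted lattice count into an average of a Siegel-type transform along a single horocyclic orbit and to feed in Ratner/Dani--Smillie equidistribution. Write $G=\d{SL}(2,\dr R)$, $X=\G\backslash G$ with Haar measure $\bar m$ normalized so that $\bar m(X)=c(\G)$, and $m_X=\bar m/c(\G)$. Since $\|\cdot\|$ is bi-$\d{SO}(2)$-invariant and $c(\G)$ is rotation invariant, I may assume after a rotation that $v=re_1$ with $r=|v|$, so that $\d{Stab}_G(v)$ is the upper unipotent group $U=\{u_\xi=\bigl(\begin{smallmatrix}1&\xi\\0&1\end{smallmatrix}\bigr)\}$. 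Every $g\in G$ factors uniquely as $g=\sigma(w)u_\eta$ with $w=gv\in\dr R^2\setminus\{0\}$, $\eta\in\dr R$, and $\sigma(w)=k_{\arg w}\,\d{diag}(|w|/r,\,r/|w|)$; in these coordinates Haar measure disintegrates as $d\mu_G=C\,dw\,d\eta$ (Lebesgue), because $G$ preserves Lebesgue measure on $\dr R^2$ and $U=\d{Stab}(v)$. For $\g\in\G$ put $w_\g=\g v$ and let $\eta_\g$ be its unipotent coordinate, so $\g=\sigma(w_\g)u_{\eta_\g}$. A direct computation gives $\|\g\|^2=(|w_\g|/r)^2+(|w_\g|\eta_\g/r)^2+(r/|w_\g|)^2$, so that, uniformly for $w_\g$ in a fixed compact annulus, $\|\g\|\le T\iff|\eta_\g|\le\Xi(w_\g)(1+o(1))$ with $\Xi(w)=rT/|w|$. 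Thus for $\phi\in\mathcal C_c(\dr R^2\setminus\{0\})$ the relevant $\g$ have bounded radial coordinate $|w_\g|$ but unipotent coordinate ranging over an interval of length $\asymp T$: the counting region is long in the $U$-direction, which is exactly what horocycle equidistribution controls.

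The dynamical input is the following classical equivalence: $\G v$ is non-discrete iff $\G\cap U=\d{Stab}_\G(v)$ is trivial, iff the horocycle orbit $\{\G u_\xi:\xi\in\dr R\}$ through $\G e$ is non-periodic; by Dani--Smillie (a case of Ratner's theorem) such an orbit equidistributes, $\frac1L\int_0^L F(\G u_\xi)\,d\xi\to\int_X F\,dm_X$ for $F\in\mathcal C_c(X)$. To exploit this, fix a narrow bump $\chi_\epsilon=\frac1{2\epsilon}\mathbf 1_{[-\epsilon,\epsilon]}$, set $\psi(\sigma(w)u_u)=\phi(w)\chi_\epsilon(u)\in\mathcal C_c(G)$, and form the Siegel transform $\Theta_\psi(\G g)=\sum_{\g\in\G}\psi(\g g)$. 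Since $\g u_\xi=\sigma(w_\g)u_{\eta_\g+\xi}$, one gets $\int_{-\Xi_0}^{\Xi_0}\Theta_\psi(\G u_\xi)\,d\xi=\sum_\g\phi(w_\g)\int_{-\Xi_0}^{\Xi_0}\chi_\epsilon(\eta_\g+\xi)\,d\xi\approx\sum_{|\eta_\g|\le\Xi_0}\phi(\g v)$, realizing the count with a symmetric window $|\eta_\g|\le\Xi_0$ in the unipotent coordinate.

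Now I match the window to the norm ball and reassemble. For $\phi$ supported in a thin annulus at radius $\rho_0$ I choose $\Xi_0=rT/\rho_0=\Xi(w)$, so that $\{|\eta_\g|\le\Xi_0\}$ coincides with $\{\|\g\|\le T\}$ on $\d{supp}\,\phi$ to leading order; equidistribution then gives $\int_{-\Xi_0}^{\Xi_0}\Theta_\psi(\G u_\xi)\,d\xi\sim2\Xi_0\,\frac1{c(\G)}\int_G\psi\,d\mu_G=2\Xi_0\frac{C}{c(\G)}\int\phi\,dw$. Hence the annular count is $\sim\frac{2rTC}{\rho_0\,c(\G)}\int_{\d{ann}}\phi\,dw$, and summing over a partition of $\dr R^2\setminus\{0\}$ into thin annuli (replacing $1/\rho_0$ by $1/|w|$) yields $\frac1T\sum_{\|\g\|\le T}\phi(\g v)\to\frac{2rC}{c(\G)}\int\phi(w)\frac{dw}{|w|}$. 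With the standard Haar normalization underlying $c(\G)$ one has $C=1/(2r^2)$, so the factor $r$ coming from the window width and the factor $r^{-2}$ coming from the invariant-measure normalization combine to the single factor $1/|v|$, and the $1/|w|$ from the reassembly produces the measure $dw/|w|$; this gives exactly $\frac1{|v|c(\G)}\int\phi\frac{dw}{|w|}$, as claimed.

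The hard part is uniformity and the cusp. The horocycle equidistribution I invoke is ineffective, whereas the argument requires letting $\epsilon\to0$, $\Xi_0=rT/\rho_0\to\infty$, and the annular reassembly act together, with all error terms controlled uniformly in the radius $\rho_0$; in particular one must bound the boundary layer $|\eta_\g|\approx\Xi_0$ so that the $o(1)$ in the norm cutoff costs only $o(\Xi_0)$. Most delicate is that $\Theta_\psi$ is unbounded toward the cusp when $\G$ is non-uniform, so applying equidistribution to $F=\Theta_\psi$ requires either truncation combined with Dani--Margulis non-divergence (to bound the time the horocycle spends high in the cusp) or an equidistribution statement valid for functions of controlled growth. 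Establishing these uniform, cusp-aware estimates is the technical core; the rest is the bookkeeping sketched above.
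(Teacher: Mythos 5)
This theorem is not proved in the paper: it is quoted from Ledrappier \cite{ledrappier} and Nogueira \cite{nogueira}, and the text only records that Ledrappier's proof rests on equidistribution of long pieces of a dense $U$-orbit in $\d{SL}(2,\dr Z)\backslash \d{SL}(2,\dr R)$. Your sketch follows exactly that strategy, so there is no divergence of method to report; what remains is to assess the sketch on its own. The structural skeleton is sound and the computations you do carry out are correct: the identification of $\dr R^2\setminus\{0\}$ with $G/U$ via the orbit map, the disintegration $d\mu_G=C\,dw\,d\eta$ (which holds because Lebesgue measure on the plane is the $G$-invariant measure on $G/U$), the Frobenius-norm formula $\|\g\|^2=(|w|/r)^2+(|w|\eta/r)^2+(r/|w|)^2$, the equivalence between non-discreteness of $\G v$, triviality of $\G\cap U$, and non-periodicity (hence equidistribution, by Dani--Smillie) of the horocycle orbit, and the $r$-dependence $C=C_1/r^2$ that produces the factor $1/|v|$ in the limit.

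The gap is the one you name yourself, and it is not a formality: it is the entire analytic content of the theorem. Dani--Smillie equidistribution applies to a \emph{fixed} bounded continuous function on $\G\backslash G$, whereas you need it (i) for the Siegel transform $\Theta_\psi$, which is unbounded near the cusp when $\G$ is non-uniform, so the statement you invoke does not apply to it without a truncation argument plus Dani--Margulis non-divergence to control the mass the orbit segment puts high in the cusp; and (ii) uniformly as three limits interact --- $\epsilon\to 0$ in the bump $\chi_\epsilon$ (which is moreover an indicator, not in $\mathcal C_c$, a minor but real repair), the window $\Xi_0=rT/\rho_0\to\infty$, and the annulus width tending to $0$ in the final reassembly. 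Because the equidistribution theorem is ineffective, there is no rate to play these limits against each other; one must instead fix $\epsilon$ and the annular partition first, extract explicit $O(\epsilon)$ and $O(\text{width})$ error bounds on the counting function before passing to the $T$-limit, and only then shrink the auxiliary parameters. Until that order of quantifiers is written down and the cusp contribution is bounded, the argument is a correct plan rather than a proof; this is precisely the work done in \cite{ledrappier}.
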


We may draw a picture of this equidistribution theorem, for example with $\G=SL(2,\dr Z)$. Here is shown the orbit of the point $\begin{pmatrix} 1\\ \frac{\pi}{2}\end{pmatrix}$ under the ball of radius $1000$. We draw only the points which are falling in some compact to avoid the rescaling of the picture (in the theorem, $\phi$ has to have compact support):

\begin{figure}[ht]
\begin{center}
\includegraphics[scale=.5]{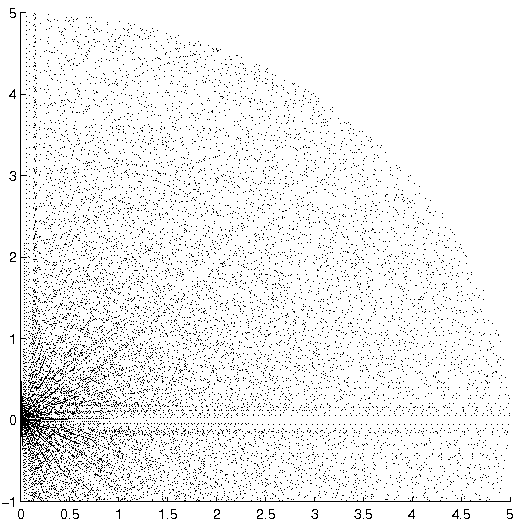} \qquad
\end{center}
\caption{Orbit under the ball of radius 1000}
\end{figure} 

A striking phenomenon is the gaps around lines of simple rational slopes. This appears for any initial point. We will describe here these gaps in a fully elementary way for the lattice $\G=\d{SL}(2,\dr Z)$ (which is enough to describe it for all arithmetic lattices). Let us mention that our analysis is carried on in the arithmetic case for sake of elementariness but a similar analysis can be done for non-arithmetic lattices.

Another experimentation with a cocompact lattice does not show these gaps. It will be clear from the analysis below that this comes from the unique ergodicity of the unipotent flow in $\d{SL}(2,\dr Z)\backslash \d{SL}(2,\dr R)$.

\section{The plane and the horocycles}

The key point in the theorem of Ledrappier is the identification of $\dr R^2\setminus \{0\}$ and the space of horocycles $SL(2,\dr R)/U$, where $U=\{u(t)=\begin{pmatrix} 1 & t\\ 0& 1\end{pmatrix} \d{ for } t \in \dr R\}$ is the upper triangular unipotent subgroup of $\d{SL}(2,\dr R)$. The projection from $\d{SL}(2,\dr R)$ to the plane is given by the first column of the matrix.
We will use the following section from $\dr R^2\setminus (\{0\}\times \dr R)$ to $\d{SL}(2,\dr R)$:
$$\sigma \begin{pmatrix} a\\b\end{pmatrix} \mapsto \begin{pmatrix} a & 0\\b & a^{-1}\end{pmatrix}\; .$$
Then we have: $\sigma \begin{pmatrix} a\\b\end{pmatrix} u(t)= \begin{pmatrix} a & ta\\b & a^{-1}+tb\end{pmatrix}$, which in turn projects to the same point $\begin{pmatrix} a\\ b\end{pmatrix}$.

The theorem of Ledrappier is proven using the fact that a large portion of  a dense orbit of $U$ in $\d{SL}(2,\dr Z)\backslash \d{SL}(2,\dr R)$ becomes equidistributed in this space. Without any more detail on this, we may just remark that if $\frac{a}{b}$ is rational, the orbit $\sigma\begin{pmatrix} a\\b\end{pmatrix}U$ projects in a periodic horocycle in $\d{SL}(2,\dr Z)\backslash \d{SL}(2,\dr R)$. This means that the application $\dr R \to \d{SL}(2,\dr Z)\backslash \d{SL}(2,\dr R)$ given by $t\mapsto \d{SL}(2,\dr Z)\sigma\begin{pmatrix} a\\b\end{pmatrix} u(t)$ is periodic. Another way to state it: there exists $t\in \dr R^*$ and $\gamma \in \d{SL}(2,\dr Z)$ such that we have $\g \sigma\begin{pmatrix} a\\b\end{pmatrix}=\sigma\begin{pmatrix} a\\b\end{pmatrix}u(t)$. The period of this application is called the period of  the orbit $\sigma\begin{pmatrix} a\\b\end{pmatrix}U$.

\section{Periods and heights of points with rational slope}

Consider a point $v_0=\begin{pmatrix} a\\b\end{pmatrix}$ in $\dr R^2\setminus\{0\}$ with $\frac{b}{a}\in \dr Q$ or $a=0$. Then we may define the following number:
\begin{definition}
\item The \emph{period} $\rho(v_0)$ of $v_0$ is the period of the orbit $\sigma(v_0)U$ in the space $\d{SL}(2,\dr Z)\backslash \d{SL}(2,\dr R)$
\end{definition}

It is not hard to effectively compute this period:
\begin{proposition}
Write $v_0=t\begin{pmatrix} p\\q\end{pmatrix}$ with $p$ and $q$ two coprime integers.

Then the period of $v_0$ is given by $\rho(v_0)=\frac{1}{t^2}$.
\end{proposition}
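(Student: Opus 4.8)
The plan is to rewrite the defining property of the period as an explicit integrality condition on a single matrix, and then to extract the value $1/t^2$ from a one-line divisibility argument. By definition $\rho(v_0)$ is the smallest $s>0$ at which the orbit map $s'\mapsto \d{SL}(2,\dr Z)\,\sigma(v_0)\,u(s')$ returns to its initial coset; because $U$ is abelian, periodicity at $0$ propagates to all times, so this is equivalent to asking that $\sigma(v_0)u(s)$ lie in $\d{SL}(2,\dr Z)\sigma(v_0)$, i.e. that
$$\gamma_s := \sigma(v_0)\,u(s)\,\sigma(v_0)^{-1}\in \d{SL}(2,\dr Z).$$

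Assume first $a\neq 0$, so that $\sigma(v_0)$ is defined. Using $\sigma(v_0)^{-1}=\begin{pmatrix} a^{-1} & 0\\ -b & a\end{pmatrix}$, a direct multiplication gives
$$\gamma_s=\begin{pmatrix} 1-abs & a^2 s\\ -b^2 s & 1+abs\end{pmatrix},$$
which has determinant $1$ for every $s$. Hence $\gamma_s\in \d{SL}(2,\dr Z)$ if and only if the three numbers $a^2 s$, $b^2 s$ and $abs$ are integers, the two diagonal entries then being integers automatically.

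The remaining work is purely arithmetic. Substituting $a=tp$ and $b=tq$ and setting $x:=t^2 s$, the three conditions become $p^2 x,\ q^2 x,\ pq\,x\in\dr Z$, and minimizing $s>0$ amounts to minimizing $x>0$. The key observation is that coprimality of $p$ and $q$ forces $x$ to be an integer: from $\gcd(p,q)=1$ we get $\gcd(p^2,q^2)=1$, so B\'ezout yields integers $m,n$ with $mp^2+nq^2=1$ and therefore $x=m\,(p^2 x)+n\,(q^2 x)\in\dr Z$. Since $x=1$ visibly satisfies all three conditions, the least admissible value is $x=1$, that is $s=1/t^2$, giving $\rho(v_0)=1/t^2$.

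It remains to treat the excluded vertical direction $a=0$, where $\sigma$ is undefined. Here $v_0=t\begin{pmatrix}0\\1\end{pmatrix}$, and since the period depends only on the $U$-orbit of any lift of $v_0$, I would simply replace $\sigma(v_0)$ by the representative $\begin{pmatrix} 0 & -t^{-1}\\ t & 0\end{pmatrix}$; the same conjugation computation then produces $\gamma_s=\begin{pmatrix} 1 & 0\\ -t^2 s & 1\end{pmatrix}$, so once more $t^2 s\in\dr Z$ and $\rho(v_0)=1/t^2$. I expect both the matrix computation and the B\'ezout step to be completely routine; the only points deserving care are keeping the scaling parameter $t$ of $v_0$ notationally separate from the horocycle parameter $s$ in $u(s)$, and justifying that different lifts of $v_0$ yield the same period (which holds because any two lifts differ by right multiplication by an element of $U$, hence share the same $\d{SL}(2,\dr Z)$-orbit under $U$).
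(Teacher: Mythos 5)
Your proof is correct and follows essentially the same route as the paper: both reduce the periodicity condition to the integrality of the conjugate $\sigma(v_0)u(s)\sigma(v_0)^{-1}$ (the paper solves $\gamma\sigma(v_0)=\sigma(v_0)u(s)$ entry by entry, which produces exactly your matrix $\gamma_s$, and its exhibited matrix $\begin{pmatrix}1+pq& p^2\\q^2& 1-pq\end{pmatrix}$ is your $\gamma_s$ at $s=1/t^2$ up to the sign convention in the off-diagonal entries), and both then use coprimality of $p$ and $q$ to force $t^2s\in\mathbb{Z}$. If anything, your write-up is more complete than the paper's: the B\'ezout step $x=m(p^2x)+n(q^2x)$ makes precise the paper's terse divisibility remark, and you treat the vertical case $a=0$ explicitly where the paper only says one can change the section.
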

\begin{proof}
We assume here that $p\neq 0$ (if not you can change the section $\sigma$). The point $t\begin{pmatrix} p\\q\end{pmatrix}$ correspond via $\sigma$ to the matrix $\begin{pmatrix} tp& 0\\tq&(tp)^{-1}\end{pmatrix}$. So we have to solve the equation: $\gamma\begin{pmatrix} tp& 0\\tq&(tp)^{-1}\end{pmatrix}=\begin{pmatrix} tp& 0\\tq&(tp)^{-1}\end{pmatrix}u(s)$ for $\gamma$ in $\d{SL}(2,\dr Z)$ and $s$ real. That is: $$\begin{pmatrix} t(ap+bq)& b(tp)^{-1}\\t(cp+dq)&d(tp)^{-1}\end{pmatrix}=\begin{pmatrix} tp& stp\\tq&stq+(tp)^{-1}\end{pmatrix}\;,$$ for $a$, $b$, $c$, $d$ integers verifying $ad-bc=1$ and $s$ real. We check that $b$ and $d-1$ have to be divisible by $p$ hence $s$ has to belong to $\frac{1}{t^2}\dr Z$. Now we easily check the following equality, thus proving the proposition :
$$\begin{pmatrix}1+pq& p^2\\q^2& 1-pq\end{pmatrix}\begin{pmatrix} tp& 0\\tq&(tp)^{-1}\end{pmatrix}=\begin{pmatrix} tp& 0\\tq&(tp)^{-1}\end{pmatrix}\begin{pmatrix} 1& \frac{1}{t^2}\\0&1\end{pmatrix}\;.$$
\end{proof}
This computation is an elementary way to check that the period of a point with rational slope is invariant under the action of $\d{SL}(2,\dr Z)$: the image under an element of $\d{SL}(2,\dr Z)$ of a point $\begin{pmatrix} p\\ q\end{pmatrix}$ with coprime $p$ and $q$ is still a point of this form. Of course, a more intrinsic way to see this is to look at the definition of the period which is clearly invariant under $\d{SL}(2,\dr Z)$. Anyway this simple fact is the key remark. Indeed the set of points of fixed period is a discrete subset of the plane. Call $\mathcal P(\rho):=\{v \in \dr R^2 \d{ of rational slope with period }\rho\}$. The previous proposition describe these sets as $\mathcal P(\rho)=\frac{1}{\sqrt{\rho}}\dr Z\wedge\dr Z$ where $\dr Z\wedge \dr Z$ stands for the set of points with coprime integer coordinates.

Moreover we may define the height of a point of rational slope (using the height function on the space $\dr P_1(\dr Q)$) by this simple formula: $h(t\begin{pmatrix} p\\q\end{pmatrix})=\sqrt{p^2+q^2}=|\begin{pmatrix} p\\q\end{pmatrix}|$ (as usual $p$ and $q$ are coprime integers).
We have the following tautological formula for any point $v$ of rational slope in the plane : $$\rho(v)|v|^2=h(v)^2\; .$$

\section{Spectrum of periods}

Consider $v$ a point in the plane (not $0$). Then for each $\rho>0$, the distance of $v$ to the set $\mathcal P(\rho)$ is a nonnegative real number. Moreover if $v$ has irrational slope, this number is positive for each $\rho$. We then define a function, called spectrum of periods, for $v$:
\begin{definition}
Let $v$ be a point in the plane of irrational slope. Then its \emph{spectrum of periods} $\mathcal D_v$ is the function :$$\mathcal D_v : \begin{matrix} \dr R_+^*&\to &\dr R_+^*\\ \rho & \mapsto & d(v,\mathcal P(\rho))\end{matrix}$$
\end{definition}
The description of the sets $\mathcal P(\rho)$ made above allows the following rewriting of $\mathcal D_v$: $\mathcal D_v(\rho)=\frac{1}{\sqrt{\rho}}d(\sqrt{\rho} v,\dr Z \wedge \dr Z)$. This last expression shows that for $\rho$ big enough this function encodes the diophantine property of the slope of $v$, and may be interesting to study precisely. But a first remark is that $\mathcal D_v(\rho)$ is always smaller than $\frac{1}{\sqrt{\rho}}$; moreover for $\rho \leq \frac{1}{(2|v|)^2}$, $\mathcal D_v(\rho)$ is bigger than $\frac{1}{2\sqrt{\rho}}$:
\begin{lemma}
For $\rho \leq \frac{1}{(2|v|)^2}$, we have $\frac{1}{2\sqrt{\rho}}\leq \mathcal D_v(\rho)\leq \frac{1}{\sqrt{\rho}}$. Moreover, as $\rho \to 0$, $\mathcal D_v(\rho)$ is equivalent to $\frac{1}{\sqrt{\rho}}$.
\end{lemma}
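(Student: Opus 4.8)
The plan is to work with the rescaled expression $\mathcal D_v(\rho)=\frac{1}{\sqrt{\rho}}\,d(\sqrt{\rho}\,v,\dr Z\wedge\dr Z)$ recorded just above the statement, and to set $w=\sqrt{\rho}\,v$. Under the hypothesis $\rho\le\frac{1}{(2|v|)^2}$ one has $|w|=\sqrt{\rho}\,|v|\le\frac12$, so the whole lemma reduces to the single planar claim that, for $|w|\le\frac12$,
\[
\tfrac12\le d(w,\dr Z\wedge\dr Z)\le 1,
\]
after which multiplying through by $\frac{1}{\sqrt{\rho}}$ returns the two displayed inequalities.

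The fact I would isolate first, and which carries the whole argument, is that the minimal norm of a primitive point is exactly $1$: every element of $\dr Z\wedge\dr Z$ is a nonzero integer vector, hence has norm $\ge 1$, while the four vectors $(\pm1,0),(0,\pm1)$ are primitive and have norm $1$. Equivalently, the open unit disc around the origin contains no point of $\dr Z\wedge\dr Z$ (its only integer point, the origin, is not primitive). This is precisely the ``empty disc'' responsible for the gaps, and it gives the lower bound at once: for any primitive $P$ we have $|w-P|\ge|P|-|w|\ge 1-|w|\ge\frac12$, so $d(w,\dr Z\wedge\dr Z)\ge\frac12$.

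For the upper bound I would exhibit a nearby primitive point. Using the symmetries of $\dr Z\wedge\dr Z$ (sign changes and the coordinate swap) I may assume $w_1\ge|w_2|\ge 0$; then I take $P=(1,0)$ and compute $|w-P|^2=|w|^2-2w_1+1$. Since $w_1\ge|w|/\sqrt{2}$ and $|w|\le\frac12<\sqrt{2}$, the quantity $|w|^2-2w_1\le|w|\bigl(|w|-\sqrt{2}\bigr)\le 0$, whence $|w-P|\le 1$. This yields $d(w,\dr Z\wedge\dr Z)\le 1$, hence $\mathcal D_v(\rho)\le\frac{1}{\sqrt{\rho}}$, recovering in the relevant range the general remark made before the lemma.

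Finally, for the equivalence as $\rho\to0$ I would sharpen the two bounds into $1-|w|\le d(w,\dr Z\wedge\dr Z)\le 1+|w|$, the upper one coming again from comparison with $(1,0)$ via the triangle inequality. Since $\rho\to0$ forces $|w|=\sqrt{\rho}\,|v|\to0$, both sides tend to $1$, so $\sqrt{\rho}\,\mathcal D_v(\rho)=d(\sqrt{\rho}\,v,\dr Z\wedge\dr Z)\to1$, which is exactly $\mathcal D_v(\rho)\sim\frac{1}{\sqrt{\rho}}$. I do not anticipate any serious obstacle: the only step requiring care is the lower bound, namely the verification that no primitive point lies strictly within distance $1$ of the origin, which is where the non-primitivity of the origin is genuinely used; everything else is elementary planar geometry.
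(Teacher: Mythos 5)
Your proof is correct and follows essentially the same route as the paper's: rescale to $w=\sqrt{\rho}\,v$ with $|w|\le\frac12$ and use that every primitive integer vector has norm at least $1$ to get the lower bound $d(w,\dr Z\wedge\dr Z)\ge 1-|w|\ge\frac12$. You simply supply the details the paper leaves implicit (the upper bound via comparison with $(\pm1,0)$, $(0,\pm1)$, and the two-sided bound $1-|w|\le d(w,\dr Z\wedge\dr Z)\le 1+|w|$ giving the equivalence as $\rho\to 0$), all of which check out.
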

\begin{proof}
If $\rho$ is less than $\frac{1}{(2|v|)^2}$, the modulus of $\sqrt{\rho} v$ is less than $\frac{1}{2}$. So its distance to $\dr Z\wedge \dr Z$ is more than $\frac{1}{2}$, proving the inequality. The equivalence is straightforward.
\end{proof}

\bigskip

We are now able to state the desired property: the orbit of $v$ under the set $\Gamma_T=\{\gamma \in \d{SL}(2,\dr Z) \st \|\gamma\|\leq T\}$ cannot come too close of the points of rational slopes. 
\begin{proposition}\label{main}
Let $w$ be a point of rational slope in the plane. Then the distance of $\Gamma_T v$ to $w$ is bounded from below by $\frac{\mathcal D_v(\rho(w))}{T}=\frac{\mathcal D_v(\frac{h(w)^2}{|w|^2})}{T}$.
\end{proposition}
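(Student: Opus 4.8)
The plan is to convert the assertion ``$\g v$ is close to $w$'' into the assertion ``$v$ is close to $\g^{-1}w$'', and then to combine the invariance of the period under $\d{SL}(2,\dr Z)$ with an elementary distortion estimate for the Euclidean norm. The crucial observation is that, since the quotient defining the period is taken on the left by $\d{SL}(2,\dr Z)$, the period is $\d{SL}(2,\dr Z)$-invariant: for $\g\in\d{SL}(2,\dr Z)$ the point $\g^{-1}w$ again has rational slope and $\rho(\g^{-1}w)=\rho(w)$, so $\g^{-1}w\in\mathcal P(\rho(w))$. Concretely, writing $w=s\begin{pmatrix} p\\ q\end{pmatrix}$ with $p,q$ coprime, we have $\g^{-1}w=s\,\g^{-1}\begin{pmatrix} p\\ q\end{pmatrix}$, and $\g^{-1}\begin{pmatrix} p\\ q\end{pmatrix}$ is again a primitive integer vector, so the scaling factor $s$, and hence the period $1/s^2$, is unchanged.

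First I would record the norm estimate. For $\g=\begin{pmatrix} a & b\\ c & d\end{pmatrix}\in\d{SL}(2,\dr R)$ one has $\g^{-1}=\begin{pmatrix} d & -b\\ -c & a\end{pmatrix}$, so the Euclidean (Frobenius) norm on $\mathcal M(2,\dr R)$ satisfies $\|\g^{-1}\|=\|\g\|$; moreover the operator norm is dominated by the Frobenius norm. Hence for any vector $x\in\dr R^2$ we have $|\g^{-1}x|\leq \|\g^{-1}\|\,|x|=\|\g\|\,|x|$.

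Next, fix $\g\in\G_T$, so that $\|\g\|\leq T$. Applying the previous inequality to $x=\g v-w$ gives
$$|v-\g^{-1}w|=|\g^{-1}(\g v-w)|\leq \|\g\|\,|\g v-w|\leq T\,|\g v-w|\;.$$
Since $\g^{-1}w\in\mathcal P(\rho(w))$, the left-hand side is at least $d(v,\mathcal P(\rho(w)))=\mathcal D_v(\rho(w))$, whence $|\g v-w|\geq \mathcal D_v(\rho(w))/T$. As this holds for every $\g\in\G_T$, taking the infimum over $\G_T$ yields the claimed lower bound on the distance of $\G_T v$ to $w$; the final rewriting $\rho(w)=h(w)^2/|w|^2$ is just the tautological formula of Section 2.

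The only point requiring care is the norm bookkeeping — that $\|\g^{-1}\|=\|\g\|$ for this particular norm and that it controls the distortion of Euclidean distances — together with the verification of period invariance. Both are elementary, and there is no genuine analytic obstacle here: the entire force of the statement rests on the rigidity provided by the integrality of $\g$, which keeps $\g^{-1}w$ pinned to the same discrete set $\mathcal P(\rho(w))$.
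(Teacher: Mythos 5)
Your proof is correct and follows essentially the same route as the paper's: pass from $|\g v-w|$ to $|v-\g^{-1}w|$ via the bounded distortion of $\g^{-1}$, use the $\d{SL}(2,\dr Z)$-invariance of the period to place $\g^{-1}w$ in $\mathcal P(\rho(w))$, and conclude from the definition of $\mathcal D_v$. You merely make explicit two points the paper leaves implicit, namely that $\|\g^{-1}\|=\|\g\|$ for the Frobenius norm on $\d{SL}(2,\dr R)$ and that $\g^{-1}$ preserves primitivity of integer vectors.
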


Let us prove the proposition before giving a more geometric description.
\begin{proof}
Consider an element $\gamma$ of $\d{SL}(2,\dr Z)$ of euclidean norm less than $T$. Then it multiplies length by at most $T$ Let us suppose that the point $\gamma v$ is very close to some $w$ with rational slope: $|\gamma v - w|= \frac{\epsilon}{T}$ for some $\epsilon$ ; we immediately get that $|v- \gamma^{-1}w|\leq \epsilon$. But the point $\gamma^{-1}w$ has same period as $w$ by invariance and thus belongs to $\mathcal P(\rho(w))$. So by definition of $\mathcal D_v$ and the tautological formula on the period, we get that $\gamma v$ cannot be too close to $w$: $$|\gamma v - w|\geq \frac{\mathcal D_v(\rho(w))}{T}\geq \frac{\mathcal D_v(\frac{h(w)^2}{|w|^2})}{T}\; .$$
\end{proof}

Now if we are interested at how the orbit of $v$ comes close some half-line of rational slopes $\dr R_+^*\begin{pmatrix} p\\ q \end{pmatrix}$, we fix the height $h(w)$. If we furthermore add the condition $|w|\geq {2|v|h(w)}$ we may use the easy bound on $\mathcal D_v$ to get: $$|\gamma v - w|\geq  \frac{|w|}{2h(w)T}\; , \d{for all }\gamma\d{ of norm less than } T.$$
We see on this last formula that the simpler is the slope (as a rational number) the harder it is to come close. The linear behavior suggests a picture in coordinates (radius, slope) to see clearly the gaps. Here we draw the whole orbit (check that the radius  of points goes up to 1900) for $T=1000$ in a small neighborhood of the horizontal axis. The gap is fairly evident.  The graphs of the functions $\frac{\mathcal D_v}{1000}$ and $-\frac{\mathcal D_v}{1000}$ are drawn in blue. The previous proposition states that no point of this orbit may fall between this two graphs. Once again we are in coordinates (radius,slope):

\begin{figure}[ht]
\begin{center}
\includegraphics[scale=.45]{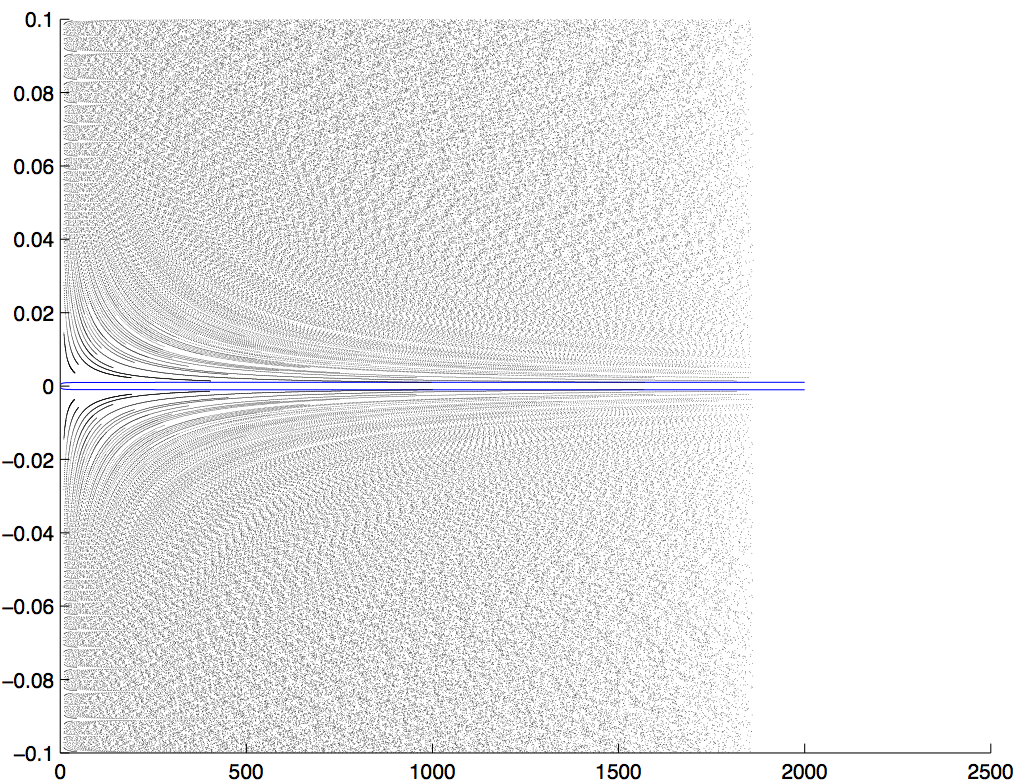} \qquad
\end{center}
\caption{The gap around the horizontal axis}
\end{figure}

\section{Two kinds of optimality}

Let us mention that the optimality of the described gap seen on the previous picture is easy to understand. Indeed the next lemma states that some points of the orbit $\Gamma_T v$ are almost as close as possible to points of rational slope.
\begin{lemma}
There exist a $\gamma \in \d{SL}(2,\dr Z)$ with $\|\gamma\|\leq T$ and some point $w$ of rational slope such that we have for all $T\geq 10$: $$ |\gamma v -w| -\frac{\mathcal D_v(\rho(w))}{T} \leq 10\frac{\mathcal D_v(\rho(w))}{T^2}\; .$$
\end{lemma}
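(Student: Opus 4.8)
The plan is to realize the equality case of Proposition \ref{main} by two compatible choices. First, to saturate the inequality $|v-\gamma^{-1}w|\ge \mathcal D_v(\rho(w))$ I will take $w=\gamma w_0$, where $w_0$ is a point of $\mathcal P(\rho)$ nearest to $v$, so that $|v-w_0|=\mathcal D_v(\rho)$. Since an element of $\d{SL}(2,\dr Z)$ sends primitive integer vectors to primitive integer vectors, $w=\gamma w_0$ is again of rational slope, and by the invariance of the period one has $\rho(w)=\rho$ and hence $\mathcal D_v(\rho(w))=|v-w_0|$. Second, to force $\gamma$ to contract the small vector $v-w_0$ as strongly as it can, I will arrange that $v-w_0$ lies exactly on the line $L_\gamma$ on which $\gamma$ acts by its smallest singular value. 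Writing $s_1\ge s_2$ for the singular values of $\gamma$ (so that $s_1s_2=1$ and $\|\gamma\|=\sqrt{s_1^2+s_2^2}$), this alignment yields the exact identity
$$|\gamma v-w|=|\gamma(v-w_0)|=s_2\,|v-w_0|=\frac{1}{s_1}\,\mathcal D_v(\rho(w)).$$

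With this identity the estimate, and the constant $10$, come out of a one-line computation, provided $\|\gamma\|$ is close to the cutoff $T$. If I can choose $\gamma$ with $\|\gamma\|\in[T-1,T]$, then from $\|\gamma\|^2=s_1^2+s_1^{-2}$ and $s_1\ge 1$ I get $s_1\ge\sqrt{\|\gamma\|^2-1}\ge T-2$ for $T\ge 10$, so that
$$|\gamma v-w|-\frac{\mathcal D_v(\rho(w))}{T}=\mathcal D_v(\rho(w))\left(\frac{1}{s_1}-\frac{1}{T}\right)\le \mathcal D_v(\rho(w))\,\frac{2}{T(T-2)}\le 10\,\frac{\mathcal D_v(\rho(w))}{T^2}.$$
That such a $\gamma$ exists is a counting remark: there are of the order of $T^2$ elements of norm at most $T$, hence of the order of $T$ with norm in the window $[T-1,T]$, so the window is non-empty for every $T\ge 10$. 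The whole lemma therefore reduces to the single geometric requirement that, for one such $\gamma$, the displacement $v-w_0$ can be placed exactly on $L_\gamma$ at a value of $\rho$ at which $w_0$ really is the nearest point (so that $|v-w_0|$ really equals $\mathcal D_v(\rho)$).

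To meet this requirement I will use $\rho$ as a free continuous parameter and track the direction $[v-w_0(\rho)]\in\dr P_1(\dr R)$, where $w_0(\rho)$ denotes a nearest point of $\mathcal P(\rho)=\frac{1}{\sqrt\rho}(\dr Z\wedge\dr Z)$ to $v$. On any $\rho$-interval on which the primitive point $m$ nearest to $\sqrt\rho v$ is constant, $w_0(\rho)=\frac{1}{\sqrt\rho}m$ slides along the ray $\dr R_+m$ while $\sqrt\rho v$ crosses the Voronoi cell of $m$; since $|v-w_0|$ stays comparable to the spacing of $\mathcal P(\rho)$ while $w_0$ moves by an amount of the same order, the direction $[v-w_0(\rho)]$ varies continuously and sweeps out an arc whose length is bounded below independently of $T$. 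On the other hand, the admissible lines $L_\gamma$ produced by the order of $T$ matrices of norm in $[T-1,T]$ become $O(1/T)$-dense in $\dr P_1(\dr R)$. Hence, for $T$ large, some $L_\gamma$ falls in the interior of such an arc, and the intermediate value theorem furnishes a $\rho^*$ with $v-w_0(\rho^*)\in L_\gamma$ at an interior point of the interval, where $w_0(\rho^*)$ is unambiguously the nearest point; taking $w=\gamma w_0(\rho^*)$ then closes the argument via the identity above.

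The step I expect to be the main obstacle is precisely this alignment. It rests on two geometry-of-numbers inputs about $\d{SL}(2,\dr Z)$ that must be made quantitative: the sweeping statement, that the displacement direction $[v-w_0(\rho)]$ runs over arcs of length bounded below uniformly in $T$ (a statement about the Voronoi cells of the primitive points met by the ray $\dr R_+v$), and the equidistribution statement, that the contracting lines of integer matrices of norm near $T$ are $O(1/T)$-dense. Both are standard, but they are the only non-formal ingredients; once they are available the displacement can be aligned with some $L_\gamma$ exactly, and the singular-value identity of the first paragraph immediately gives the stated bound.
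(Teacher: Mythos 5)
Your reduction to the singular-value identity $|\gamma v-w|=\mathcal D_v(\rho(w))/s_1$ is correct as far as it goes, but the proof has a genuine gap exactly where you locate it: the ``alignment'' step is not a routine standard fact, it is the entire content of the lemma, and you leave both of its ingredients unproved. The claim that the contracting lines $L_\gamma$ of the $\asymp T$ integer matrices with $\|\gamma\|\in[T-1,T]$ are $O(1/T)$-dense in $\mathbb P_1(\mathbb R)$ is a quantitative equidistribution statement (a counting bound alone does not prevent the directions from clustering), and the ``sweeping'' claim needs a genuine lower bound on the angle subtended at $m$ by the chord that the ray $\mathbb R_+v$ cuts in the Voronoi cell of $m$ --- this can be small, and nothing you say controls it. Moreover, even granting both inputs, your intermediate-value argument only produces the required $(\gamma,w)$ for $T\geq T_0(v)$ with $T_0$ depending on the length of the swept arc, whereas the lemma is asserted for all $T\geq 10$ with the explicit constant $10$. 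So the argument as written does not close.

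The paper's proof shows that no alignment or equidistribution is needed, because exact equality is not required: the factor $10/T^2$ of slack absorbs an $O(1/T)$ error in both the direction and the choice of nearest point. It simply takes the explicit unipotent $\gamma=\begin{pmatrix}1&T-1\\0&1\end{pmatrix}$ (or its transpose, according to whether $|a|\le|b|$ or not) and $w=\begin{pmatrix}a+(T-1)b\\0\end{pmatrix}$, the foot of $\gamma v$ on the horizontal axis. Then $|\gamma v-w|=|b|$, while $\rho(w)=(a+(T-1)b)^{-2}$ and the formula $\mathcal D_v(\rho)=\rho^{-1/2}d(\sqrt\rho\,v,\mathbb Z\wedge\mathbb Z)$ gives $(T-4)|b|\le\mathcal D_v(\rho(w))\le T|b|$, because $\sqrt{\rho(w)}\,v$ is a vector of length $O(1/T)$ whose distance to the primitive lattice points is within $O(1/T)$ of $1$. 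A two-line computation then yields the bound with constant $10$ for all $T\ge 10$. If you want to salvage your approach, the honest route is to replace ``exact alignment with $L_\gamma$'' by ``alignment up to angle $O(1/T)$,'' which a single explicit unipotent family already achieves --- at which point you have rediscovered the paper's proof.
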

\begin{proof}
Consider the matrix $\gamma=\begin{pmatrix} 1& T-1\\0&1\end{pmatrix}$ of $\Gamma_T$. Let us note $v = \begin{pmatrix} a\\b \end{pmatrix}$. Let us assume first that $|a|\leq b$. Then we have $\gamma v =\begin{pmatrix} a+(T-1)b\\b \end{pmatrix}$. Now consider the point $w =\begin{pmatrix} a+(T-1)b\\0 \end{pmatrix}$ of slope $0$. First we get that the distance $|\gamma v -w|$ is equal to $|b|$. Second we check that $$(T-4)|b|\leq \mathcal D_v(\rho(w) \leq T |b|$$ using the formula for the function $\mathcal D_v$.
That means that we have:
\begin{eqnarray*}
\frac{\mathcal D_v(\rho(w))}{T}&\leq& |b|-4\frac{|b|}{T}\\
&\leq & |\gamma v -w| - 4\frac{T}{T-4}\frac{\mathcal D_v(\rho(w))}{T^2}\\
&\leq & |\gamma v -w| - 10\frac{\mathcal D_v(\rho(w))}{T^2}
\end{eqnarray*}

So the lemma is proven in this case. If we had $|b|<|a|$, we may then consider the matrix $\gamma=\begin{pmatrix} 1& 0\\T-1&1\end{pmatrix}$ and the point $w=\begin{pmatrix} 0\\ (T-1)a+b\end{pmatrix}$ which lead to the same estimate via the same computation !
\end{proof}

\bigskip 

But of course this consideration is somehow deceptive, as it describes a general fact verified for any initial point and do not reflects the diophantine properties of this point. So let us show that the diophantine information about the beginning point effectively lies in the evolution of the orbit. Consider a point $v=\begin{pmatrix} a \\ b \end{pmatrix}$ with irrational slope $s=\frac{b}{a}$. Recall that the best approximation of $s$ by a rational number $\frac{p}{q}$ gives us the point $\begin{pmatrix} a \\ a\frac{p}{q} \end{pmatrix}$ which realizes the distance $\mathcal D_v(\frac{q^2}{a^2})$. Hence we are only interested in the periods $\rho$ of the form $\frac{q^2}{a^2}$.

According to the following lemma,  we do always get points in the orbit under a ball of big enough size $T$ which almost realizes the minimal predicted distance $\frac{\mathcal D_v(\rho)}{T}$  to the set $\mathcal P(\rho)$.
  
\begin{lemma}
Let $v=\begin{pmatrix} a \\ b \end{pmatrix}$ be a point with irrational slope, and fix $\varepsilon>0$.
Then there exists a real $T_0$ such that for all $T>T_0$, and every integer $q>1$, there is a point $\gamma.v$ in $\G_T.v$ and a point $w$ in $\mathcal P(\rho')$ such that:
 $$|\frac{\rho'}{\frac{q^2}{a^2}}-1|\leq \frac{2(1+\varepsilon)D_v(\frac{q^2}{a^2})}{aT}$$ and the distance between $\gamma.v$ and $w$ is at most $$(1+\varepsilon)\frac{\mathcal D_v(\frac{q^2}{|a|^2})}{T}\; .$$
\end{lemma}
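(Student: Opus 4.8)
The plan is to exhibit $\g$ and $w$ explicitly, the key being a lower-triangular unipotent of norm $\simeq T$ together with the freedom in the period $\rho'$. Assume $a>0$ (the cases $a<0$ and $a=0$ reduce to this by a reflection or a change of section $\sigma$, exactly as in the previous lemma). Fix $q>1$ and let $p$ be the integer nearest to $s=\frac ba$ times $q$, i.e.\ nearest to $qs$; this is where the diophantine hypothesis enters, and in the best-approximation situation $\gcd(p,q)=1$, so by the description $\mathcal D_v(\rho)=\frac1{\sqrt\rho}d(\sqrt\rho\,v,\dr Z\wedge\dr Z)$ the point $v^\ast=\begin{pmatrix} a\\ ap/q\end{pmatrix}=\frac aq\begin{pmatrix} q\\ p\end{pmatrix}\in\mathcal P(\tfrac{q^2}{a^2})$ is the one realizing $D:=\mathcal D_v(\tfrac{q^2}{a^2})$. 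The elementary observation I would build on is that $v$ and $v^\ast$ share their first coordinate, so writing $e_2=\begin{pmatrix} 0\\1\end{pmatrix}$ the displacement is purely vertical: $v-v^\ast=\pm D\,e_2$.

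Next I would take $\g=\begin{pmatrix} 1&0\\ n&1\end{pmatrix}$ with $n=\lfloor\sqrt{T^2-2}\rfloor$, so that $\|\g\|=\sqrt{n^2+2}\le T$ (hence $\g\in\G_T$) and $n\ge T-2$. The virtue of this choice is that $\g e_2=e_2$, so $\g(v-v^\ast)=\pm D\,e_2$ still has length exactly $D$, while $\g v^\ast=\frac aq\begin{pmatrix} q\\ nq+p\end{pmatrix}$ is \emph{almost vertical}, its second coordinate being of size $\simeq aT$. Thus the fixed small vertical displacement is now nearly parallel to $\g v^\ast$: the parallel part can be removed by sliding the target point radially, i.e.\ by adjusting $\rho'$, and only the tiny transverse part will survive.

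To make this quantitative, set $(P,Q)=\g\begin{pmatrix} q\\ p\end{pmatrix}=\begin{pmatrix} q\\ nq+p\end{pmatrix}$, which is again primitive since $\gcd(q,nq+p)=\gcd(q,p)=1$, and for a period $\rho'$ put $w=\frac1{\sqrt{\rho'}}\begin{pmatrix} P\\ Q\end{pmatrix}=\la\,\g v^\ast\in\mathcal P(\rho')$ with $\la=\sqrt{(q^2/a^2)/\rho'}$. Then $\g v-w=(1-\la)\g v^\ast\pm D\,e_2$, and minimizing the length over $\la\in\dr R$ leaves exactly the distance from the point $\mp D\,e_2$ to the line $\dr R\,\g v^\ast$. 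Using $\det(\g e_2,\g v^\ast)=\det(\g)\det(e_2,v^\ast)=\det(e_2,v^\ast)=-a$, this minimum equals
\[
\frac{D\,|\det(e_2,\g v^\ast)|}{|\g v^\ast|}=\frac{Da}{|\g v^\ast|}=\frac{Dq}{\bigl|\g\binom qp\bigr|}.
\]
Since $\bigl|\g\binom qp\bigr|\ge |nq+p|\ge qT/(1+\varepsilon)$ as soon as $n\ge T-2$ with $T\ge T_0(\varepsilon,s)$ — note that the factor $q$ cancels, which is precisely what gives uniformity in $q$ — this distance is at most $(1+\varepsilon)D/T$, the asserted bound. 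At the optimal $\la^\ast$ one has $|1-\la^\ast|=D|c_\parallel|/|\g v^\ast|$, where $c_\parallel$ is the component of $e_2$ along $\g v^\ast$; from $|c_\parallel|\le1$ and $|\g v^\ast|\ge aT/(1+\varepsilon)$ we get $|1-\la^\ast|\le (1+\varepsilon)D/(aT)$, whence $\bigl|\rho'/(q^2/a^2)-1\bigr|=|(\la^\ast)^{-2}-1|\le \frac{2(1+\varepsilon)D}{aT}$ after absorbing the harmless second-order term into $T_0$.

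The remaining verifications are routine: the two inequalities $|nq+p|\ge qT/(1+\varepsilon)$ and $|\g v^\ast|\ge aT/(1+\varepsilon)$ both follow from $n\ge T-2$ with a $T_0$ depending only on $\varepsilon$ and $s$, and $|(\la^\ast)^{-2}-1|\le 2|1-\la^\ast|(1+o(1))$ is a trivial linearization. The one conceptual obstacle is the choice of $\g$. A naive attempt to contract the displacement $\pm D e_2$ by a factor $T$ is impossible for an integer matrix, since the second column of $\g$ is a nonzero integer vector of length $\ge1$; the way out is the opposite move — keep the displacement fixed and swing the reference point $\g v^\ast$ to near-vertical — so that the unavoidable error lives in the transverse direction and is controlled, through $\det\g=1$ and the identity $\det(\g e_2,\g v^\ast)=\det(e_2,v^\ast)$, by the single quantity $q/\bigl|\g\binom qp\bigr|\simeq 1/T$.
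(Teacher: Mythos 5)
Your proof is correct, and at the same level of rigour as the paper's (like the author, you take for granted that the minimizer in $\mathcal P(\tfrac{q^2}{a^2})$ has first coordinate $a$, and the primitivity of $(q,p)$ really requires $\tfrac pq$ to be a reduced best approximation; the paper glosses over both points too). The core mechanism is identical: an explicit element of $\G_T$ of norm exactly $\sqrt{N^2+2}\leq T$, combined with the one-parameter radial freedom in choosing $w$ (i.e.\ in choosing $\rho'$) to absorb all but a $1/T$-fraction of the vertical displacement $v-v^*$. The execution differs in an instructive way. The paper takes $\gamma=\begin{pmatrix}N&-1\\1&0\end{pmatrix}$ and performs the radial rescaling on $w_0$ \emph{before} applying $\gamma$, solving $\alpha w_0-v=\lambda\begin{pmatrix}1\\N\end{pmatrix}$ exactly so that the displacement lies along the contracted direction and $|\gamma v-w|=|\lambda|$ drops out of a closed formula for $\alpha$ and $\lambda$, which is then reused verbatim for the period estimate. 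You take the unipotent $\begin{pmatrix}1&0\\n&1\end{pmatrix}$, which fixes the displacement $\pm D e_2$, and rescale \emph{after} applying $\gamma$, reading the residual off as the distance from $De_2$ to the line $\dr R\,\gamma v^*$ via the identity $\det(\gamma e_2,\gamma v^*)=\det(e_2,v^*)=-a$. Since $\begin{pmatrix}N&-1\\1&0\end{pmatrix}=\begin{pmatrix}0&-1\\1&0\end{pmatrix}\begin{pmatrix}1&0\\-N&1\end{pmatrix}$, the two computations are isometric images of one another; your determinant formulation makes the uniformity in $q$ (the cancellation of $q$ in $Dq/|\gamma\binom{q}{p}|$) more transparent, while the paper's version avoids the minimization over $\lambda$ and the linearization of $\lambda\mapsto\lambda^{-2}$ by producing the exact scaling factor $\alpha$ up front.
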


\begin{proof}
Once again the proof is elementary. We just have to find in $\G_T$ a contracting element $\gamma$ and apply it to a well-chosen  vector. I let the reader verify that the following construction verifies the above estimates. Take $N$ the biggest integer such that $N^2+2\leq T^2$, and consider the matrix $\gamma=\left(\begin {matrix} N & -1\\ 1&0\end{matrix}\right)$ of $\G_T$. This matrix contracts the vector $\begin{pmatrix} 1 \\ N \end{pmatrix}$ to the vector $\begin{pmatrix} 0 \\ 1 \end{pmatrix}$.

Hence, let $w_0=\begin{pmatrix} a \\ b' \end{pmatrix}$ be the point of $\mathcal P(\frac{q^2}{a^2})$ realizing the infimum distance $\mathcal D_v(\frac{q^2}{a^2})$. Eventually, consider $\alpha$ and $\lambda$ the solutions of $$\alpha w_0 - v = \lambda \begin{pmatrix} 1 \\ N \end{pmatrix}$$ (which has solutions for all but possibly one integer $N$).We have $\alpha=\frac{Na-b}{Na-b'}$ and $\lambda=\frac{a(b-b')}{Na-b'}$.

Now consider $w=\gamma(\alpha w_0)$. We have: $$w-\gamma.v=\gamma (\alpha w_0 - v)=\lambda \begin{pmatrix} 0 \\ 1 \end{pmatrix}$$
Hence the distance between $w$ and $\gamma.v$ is $\lambda$ which is as near as wanted of $\frac{\mathcal D_v(\frac{q^2}{|a|^2})}{T}$ (recall that $\mathcal D_v(\frac{q^2}{a^2})=b-b'$).

Moreover the period $\rho'$ of $w$ is the one of $\alpha w_0$, i.e. $\frac{\rho}{\alpha^2}$. Hence we get the desired control on $\rho'$ by checking that, for $N$ big enough (but independent of $q$):

$$|\frac{1}{\alpha^2}-1|=\frac{(Na-b')^2-(Na-b)^2}{(Na-b')^2}\leq \frac{(1+\varepsilon)2D_v(\frac{q^2}{a^2})}{Ta}$$
\end{proof}

\smallskip

This previous result allows us to get the best rationnal approximation of the slope  by the following limit:

\begin{proposition}
Let $q$ be a positive integer and $v=\begin{pmatrix} a \\ b \end{pmatrix}$ be a point with irrational slope $s=\frac{b}{a}$. 

Then we have the following equality:

$$\inf\left\{|s-\frac{p}{q}| \d{ for } p\in \mathbb Z\right\} = \lim_{T\to \infty} \frac{T}{a} \inf\left\{d(\G_T.v, \mathcal P(\rho'))\d{ for } \left|\frac{a^2\rho'}{q^2}\right|\leq \frac{2}{qT}\right\}$$
\end{proposition}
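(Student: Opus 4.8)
The plan is to prove that both sides equal $\tfrac{1}{a}\mathcal D_v(\tfrac{q^2}{a^2})$ by a two--sided estimate, and then to read this common value off the explicit formula $\mathcal D_v(\rho)=\tfrac{1}{\sqrt\rho}\,d(\sqrt\rho\,v,\dr Z\wedge\dr Z)$. Write $\rho_q=\tfrac{q^2}{a^2}$ and note that the constraint imposed on $\rho'$ in the inner infimum confines it to a neighbourhood of $\rho_q$ of radius $O(\tfrac{1}{qT})$, which collapses to $\{\rho_q\}$ as $T\to\infty$; the whole argument consists in controlling the infimum over this shrinking window, using that $\mathcal D_v$ is continuous (indeed $d(\cdot,\dr Z\wedge\dr Z)$ is $1$--Lipschitz).

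For the lower bound I would invoke Proposition \ref{main}: every $w\in\mathcal P(\rho')$ has $\rho(w)=\rho'$, whence $d(\G_T v,w)\geq \tfrac{\mathcal D_v(\rho')}{T}$ and therefore $d(\G_T v,\mathcal P(\rho'))\geq \tfrac{\mathcal D_v(\rho')}{T}$. Multiplying by $\tfrac{T}{a}$ and infimizing over the admissible $\rho'$ gives $\tfrac{T}{a}\inf_{\rho'}d(\G_T v,\mathcal P(\rho'))\geq \tfrac{1}{a}\inf_{\rho'}\mathcal D_v(\rho')$; since the admissible $\rho'$ all tend to $\rho_q$ (which itself lies in the window), the right--hand side tends to $\tfrac{1}{a}\mathcal D_v(\rho_q)$ by continuity, yielding $\liminf_T\geq \tfrac{1}{a}\mathcal D_v(\rho_q)$. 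For the upper bound I would apply the preceding Lemma with a fixed $\varepsilon>0$: for $T$ large it produces $\gamma\in\G_T$ and $w\in\mathcal P(\rho')$ with $d(\gamma v,w)\leq (1+\varepsilon)\tfrac{\mathcal D_v(\rho_q)}{T}$, the point being that its period bound $\bigl|\tfrac{a^2\rho'}{q^2}-1\bigr|\leq \tfrac{2(1+\varepsilon)\mathcal D_v(\rho_q)}{aT}$ places $\rho'$ inside the prescribed window by the inequality $\mathcal D_v(\rho_q)\leq \tfrac{1}{\sqrt{\rho_q}}=\tfrac{a}{q}$ from the Lemma of Section 3 (the factor $1+\varepsilon$ being absorbed as $\varepsilon\to 0$). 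Hence $\tfrac{T}{a}\inf_{\rho'}d(\G_T v,\mathcal P(\rho'))\leq (1+\varepsilon)\tfrac{\mathcal D_v(\rho_q)}{a}$, so $\limsup_T\leq \tfrac{1}{a}\mathcal D_v(\rho_q)$, and the two bounds identify the limit as $\tfrac{1}{a}\mathcal D_v(\rho_q)$.

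It remains to recognise this value. From the formula, $\tfrac{1}{a}\mathcal D_v(\rho_q)=\tfrac{1}{q}\,d\bigl(\tfrac{q}{a}v,\dr Z\wedge\dr Z\bigr)$ with $\tfrac{q}{a}v=\begin{pmatrix}q\\ qs\end{pmatrix}$; the points of $\mathcal P(\rho_q)$ lying on the vertical line through $v$ are exactly the $\begin{pmatrix}a\\ ap/q\end{pmatrix}$ with $\gcd(p,q)=1$, each at distance $|a|\,|s-\tfrac{p}{q}|$ from $v$, so minimising over such $p$ and dividing by $a$ returns $\inf_p|s-\tfrac{p}{q}|$. The equality of the Proposition thus reduces to the geometric assertion that the nearest point of $\mathcal P(\rho_q)$ to $v$ is realised on this vertical line, and I expect this to be the main obstacle: a competitor $\tfrac{a}{q}(q\pm1,n)$ off the line also has period exactly $\rho_q$ and may be closer, while a competitor on the line whose fraction $\tfrac{p}{q}$ reduces carries the strictly smaller period $\rho_q/\gcd(p,q)^2$ and is excluded by the window. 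One must therefore argue that the approximation relevant to a window around period $\rho_q$ is precisely the best reduced approximation with denominator $q$, and make the continuity used in the lower bound quantitative enough that replacing $\mathcal D_v(\rho')$ by $\mathcal D_v(\rho_q)$ across a window of width $O(\tfrac{1}{qT})$ costs only $o(1)$ after multiplication by $\tfrac{T}{a}$.
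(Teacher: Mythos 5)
Your two bounds are exactly the paper's proof: the upper bound is obtained by citing the preceding lemma, and the lower bound is the estimate of Proposition \ref{main} (which the paper re-derives inline as $\mathcal D_v(\rho')\leq d(v,\gamma^{-1}w)\leq aA$) combined with the same Lipschitz control $\mathcal D_v(\rho')-\mathcal D_v(\tfrac{q^2}{a^2})=O(|\tfrac{a^2\rho'}{q^2}-1|)=O(\tfrac1T)$ over the shrinking window (which, as you guessed, is meant to be centred at $\tfrac{q^2}{a^2}$: the constraint in the statement should read $|\tfrac{a^2\rho'}{q^2}-1|\leq\tfrac{2}{qT}$). So both arguments identify the limit as $\tfrac1a\mathcal D_v(\tfrac{q^2}{a^2})$, modulo the same small wrinkle about the factor $(1+\varepsilon)$ possibly pushing $\rho'$ just outside the window, which the paper ignores.

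The obstacle you flag in the last step is genuine, and you should know the paper does not resolve it either: it simply asserts, in the paragraph preceding the lemma, that the best approximation $\tfrac pq$ of $s$ gives the point $\begin{pmatrix}a\\ a\tfrac pq\end{pmatrix}$ realizing $\mathcal D_v(\tfrac{q^2}{a^2})$. Your scepticism is warranted. One has $\tfrac1a\mathcal D_v(\tfrac{q^2}{a^2})=\tfrac1q\,d\bigl((q,qs),\dr Z\wedge\dr Z\bigr)$, while the left-hand side of the proposition is $\tfrac1q\,d(qs,\dr Z)$; these coincide precisely when the distance from $(q,qs)$ to $\dr Z\wedge\dr Z$ is realized on the line $x=q$, i.e.\ essentially when the integer nearest to $qs$ is coprime to $q$. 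If it is not (take $q=2$ and $s=1+\epsilon$ with $\epsilon$ a tiny irrational: the left-hand side is $\epsilon$, whereas the nearest point of $\dr Z\wedge\dr Z$ to $(2,2+2\epsilon)$ is at distance about $1$), then the minimizing fraction $\tfrac pq$ is non-reduced, the corresponding point of the plane has period $\tfrac{q^2}{a^2\gcd(p,q)^2}$ and is excluded by the window, and the stated equality fails. In short: the part of the argument you completed coincides with the paper's; the part you could not complete is not supplied by the paper and cannot be completed as stated --- the proposition needs either a coprimality or genericity hypothesis, or a reformulation of its left-hand side.
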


\begin{proof}
The previous lemma ensure that the limsup of the right side is correct.

So we just have to prove that the liminf is bigger than the left-hand side: let $\rho'$ belong to the segment $[\frac{q^2}{a^2}-\frac{2q}{Ta^2} ; \frac{q^2}{a^2}+\frac{2q}{Ta^2}]$, $w$ be a point in $\mathcal P(\rho')$ and $\gamma \in \G_T$ be such as $d(\gamma .v,w)\leq \frac{aA}{T}$.

Then, as usual, we get $D_v(\rho')\leq d(v,\gamma^{-1}w)\leq aA$. And, as the formulas given for $D_v$ show, $D_v(\rho')-D_v(\rho) =O(|\frac{\rho'}{\rho}-1|)$. We conclude by seeing that $|\frac{\rho'}{\rho}-1|$ is a big $O$ of $\frac{1}{T}$. Hence, we have $aA\geq \mathcal D_v(\rho) +O(\frac{1}{T})$, which proves that the liminf is greater than $\frac{D_v(\rho)}{a}=\inf\left\{|s-\frac{p}{q}| \d{ for } p\in \mathbb Z\right\}$.
\end{proof}

\begin{remark}
Of course this is not a valid way to compute the left-hand side of the equality ! It only shows that we may find the dipophantine information in the orbit, hence gives us the hope that one may find a direct proof of some results on diophantine approximation from this viewpoint and generalize it to other situations (see below).
\end{remark}

\bigskip

Eventually let's restrict our attention to some compact, for example an annulus $A$. Ledrappier's theorem describe the asymptotic distribution of the sets $\Gamma_T v\cap A$, i.e.  the points of the orbit of $v$ under $\Gamma_T$ which are inside $A$. Around every line $L$ of rational slope and for every positive $T$, the proposition \ref{main} gives us a domain of area (in fact the cone over a Cantor set) $\frac{c_L}{T}$ - where $c(L)$ only depends on $L$- in which no point of $\Gamma_T v \cap A$ lies. So, globally speaking, we have found a set of area at least $\frac{c}{T}$, for some constant $c$, such that no point of the orbit of $\Gamma_T v \cap A$ falls in this set.

As Ledrappier's theorem implies that the number of points in $\Gamma_T v\cap A$ is equivalent to a constant times $T$, the information given by proposition \ref{main} seems to be a valuable one.

\section{Generalizations}

This concluding section is a mostly speculative one and far less elementary than the previous description. The point is that the method and the result concerning the repartition of the orbits of $\d{SL}(2,\dr Z)$ in the plane has been generalized, for example by Gorodnik \cite{gorodnik}, Gorodnik-Weiss \cite{goroweiss} Ledrappier-Pollicott \cite{ledrappier-pollicott} and the author \cite{poldyn-latorb} to a wide variety of situations, which may be described with some simplifications as follows.

Let $G$ be a closed simple subgroup of $\d{GL}(n,\dr R)$ or $\d{GL}(n,\dr Q_p)$ or a finite product of them. Let $H$ be a closed subgroup of $G$ that is either unipotent or simple (or semidirect product of them, but with additional assumptions \cite{poldyn-latorb}), and $\G$ a lattice in $G$. As $G$ is included in a matrix algebra, we may choose a norm to compute the size of an element of $\G$ thus defining the ball $\G_T$. Remark that in all these known cases, any lattice of $H$ is finitely generated.
Let $x$ be a point of $\GH$ with dense orbit under $\G$. Then the repartition of the orbit $\G_T.x$ in $\GH$ may be described in the same way as in theorem \ref{the:led}.\footnote{I do not want to state it precisely, nor will I be very precise in the following, as the settings require some technical hypotheses useless to discuss here.}

For example, orbits of $\d{SL}(n,\dr Z)$ in $\dr R^n$ belong to the known situations. And the same analysis as before leads to exactly the same conclusions, including the diophantine part. Moreover, we may give a description of the gaps in a more general situation. Suppose that $\GH$ is embedded in a vector space, on which $G$ acts linearily and the $G$-actions are compatible. Then $\GH$ may be equipped with a distance coming from a norm on the vector space. This situation is not so rare and may be found under some hypotheses using Chevalley's theorem \cite{borel}. Moreover suppose $H$ has closed orbit in $\GG$.

We check below that the set of points in $\GH$ corresponding to closed orbit of $H$ in $\GG$ of a given covolume $\rho$ is a closed set. If this holds, the distance from a given point $x$ of dense orbit to this set is defined and strictly positive, and the ball $\G_T$, as a finite set of invertible linear transformations, has a bounded contraction. Hence we follow the description of the gaps made before for $\d{SL}(2,\dr Z)$ without difficulties.

So we conclude this paper on the following (may be well-known) lemma:
\begin{lemma}
Let $G$ be a locally compact group, $H$ a closed subgroup of $G$  with all its lattices finitely generated and $\G$ a lattice in $G$ such that $H\cap \G$ is a lattice in $H$ of covolume one (to normalize the Haar measure on $H$). Suppose that, if $g^n$ belongs to $H$ for some $g\in G$ and $n$ integer, then $g$ belongs to $H$.

Then, for all $\rho>0$, the subset $\mathcal P(\rho)$ of $\GH$ consisting of classes $Hg$ such that $g\G g^{-1}\cap H$ is a lattice in $H$ of covolume $\rho$ is a closed set.
\end{lemma}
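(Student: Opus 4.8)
The plan is to prove closedness sequentially. I take a sequence $Hg_n\in\mathcal P(\rho)$ with $Hg_n\to Hg_\infty$ in $\GH$ and show $Hg_\infty\in\mathcal P(\rho)$. Since the quotient map $G\to\GH$ is open I can lift to representatives with $g_n\to g_\infty$ in $G$ (replacing $g_n$ by $h_ng_n$, $h_n\in H$, changes neither the coset $Hg_n$ nor the covolume of $\La_n:=g_n\G g_n^{-1}\cap H$, conjugation by $h_n\in H$ being measure preserving under the normalization assumed). Writing $\La_\infty:=g_\infty\G g_\infty^{-1}\cap H$ for the target group, the goal is that $\La_\infty$ is a lattice in $H$ of covolume exactly $\rho$.

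The engine of the proof is one uniform finiteness remark: because $\{g_n\}\cup\{g_\infty\}$ is compact and $\G$ is discrete, for every compact $K\subseteq G$ the set of $\g\in\G$ for which $g_n\g g_n^{-1}\in K$ for some $n$ is finite (it lies in $\G$ intersected with a fixed compact set). First I would deduce \emph{uniform discreteness}: choosing $K$ a compact neighbourhood of $e$ and shrinking to $V$ so that the finitely many nontrivial contributing $g_\infty\g g_\infty^{-1}$ avoid $V$, I get $\La_n\cap V=\{e\}$ for large $n$. Passing to a Chabauty-convergent subsequence $\La_n\to L$, the same remark forces $L$ to be discrete and contained in $\La_\infty$ (each point of $L$ is a limit of a single fixed $g_n\g g_n^{-1}\in H$, hence lies in $g_\infty\G g_\infty^{-1}\cap H$). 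A packing argument then bounds the covolume: any compact $K$ with $K^{-1}K\cap L=\{e\}$ satisfies $K^{-1}K\cap\La_n=\{e\}$ for large $n$, so $\d{vol}(K)\le\rho$; taking the supremum gives $\d{covol}(L)\le\rho$, so $L$ is a genuine lattice.

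Next I would pin the limit down as $L=\La_\infty$, and this is exactly where the root-closure hypothesis enters. Suppose $x\in\La_\infty\setminus L$. As $L$ is a lattice and $\langle L,x\rangle\subseteq\La_\infty$ is still discrete, some power $x^k\in L$ with $k\ge 2$. Writing $x=g_\infty\g g_\infty^{-1}$, the uniform finiteness applied near $x^k=g_\infty\g^k g_\infty^{-1}$ forces any approximating sequence in $\La_n$ to be $g_n\g^k g_n^{-1}\in H$, i.e. $(g_n\g g_n^{-1})^k\in H$; the hypothesis gives $g_n\g g_n^{-1}\in H$, hence $g_n\g g_n^{-1}\in\La_n$ and $x=\lim g_n\g g_n^{-1}\in L$, a contradiction. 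Thus $\La_\infty=L$ is a lattice of covolume $\le\rho$ (and the Chabauty limit is subsequence-independent). Finally, to show the covolume does not drop, I use finite generation: $\La_\infty$ is finitely generated, say by $x_j=g_\infty\g_j g_\infty^{-1}$, and each $x_j$, being a limit of $\La_n$, forces $g_n\g_j g_n^{-1}\in\La_n$ for large $n$. Setting $\Delta=\langle\g_j\rangle=g_\infty^{-1}\La_\infty g_\infty$, I get $g_n\Delta g_n^{-1}\subseteq\La_n$ and $g_n\Delta g_n^{-1}\to\La_\infty$, so $\d{covol}(g_n\Delta g_n^{-1})\ge\d{covol}(\La_n)=\rho$; passing to the limit along this uniformly discrete deformation yields $\d{covol}(\La_\infty)\ge\rho$, hence equality, and $\mathcal P(\rho)$ is closed.

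The main obstacle is the very last step: ruling out a drop of covolume in the limit. Lower semicontinuity (the packing bound $\le\rho$) is automatic, but the reverse inequality is genuine and is precisely where all three hypotheses — finite generation of lattices of $H$, the root-closure $g^k\in H\Rightarrow g\in H$, and the uniform discreteness coming from discreteness of $\G$ and boundedness of the $g_n$ — must be combined. When $H/(\G\cap H)$ is compact (the situation in the $\d{SL}(2,\dr Z)$ application, where $H=U$) this is closed by a covering argument dual to the packing one; in the non-cocompact case one must additionally exclude escape of mass in $H/\La_n$, which is exactly the content of the finiteness of the set of contributing $\g$ on each compact set, so I expect the same argument to carry through with this as the delicate point to verify.
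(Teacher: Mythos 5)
Your overall strategy coincides with the paper's: choose lifts $g_n\to g_\infty$, exploit the finiteness of the set of $\gamma\in\Gamma$ whose conjugates $g_n\gamma g_n^{-1}$ meet a fixed compact set, prove $\mathrm{covol}\le\rho$ by a packing argument, use the root-closure hypothesis to show every element of $g_\infty\Gamma g_\infty^{-1}\cap H$ is a limit of elements of the $\Lambda_n$ (your finite-index trick producing the power $x^k$ is a clean variant of the paper's construction with the set $A'=\bigcup_{i}(g^{-1}\gamma g)^iA$), and use finite generation to place a fixed conjugate $g_n\Delta g_n^{-1}$ of $\Lambda_\infty$ inside $\Lambda_n$ for large $n$. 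Up to that point the argument is sound and essentially parallel to the paper's.

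The gap is exactly where you locate it yourself: the inequality $\mathrm{covol}(\Lambda_\infty)\ge\rho$. You derive it from ``$g_n\Delta g_n^{-1}\to\Lambda_\infty$ and $\mathrm{covol}(g_n\Delta g_n^{-1})\ge\rho$, so pass to the limit,'' but this is upper semicontinuity of the covolume along a Chabauty-convergent sequence of lattices of $H$, which is not available for a general locally compact $H$: only the packing inequality (lower semicontinuity) comes for free, and mass can escape in the quotients when they are not cocompact. Your closing paragraph concedes that this is ``the delicate point to verify'' but does not verify it, so the proof is not complete as written. The paper closes this step by a different, purely group-theoretic device: it argues that the sequence $\Gamma\cap g_n^{-1}Hg_n$ is eventually stationary, equal to $\Gamma\cap g_\infty^{-1}Hg_\infty$ (not merely containing a fixed finite-index subgroup), so that for large $n,m$ the element $g_n^{-1}g_m$ normalizes $H$ and preserves its Haar measure; since the set of such elements is a closed subgroup, $g_n^{-1}g_\infty$ also normalizes $H$ and preserves its Haar measure, and conjugation by $g_\infty g_n^{-1}$ then transports the covolume $\rho$ from $\Lambda_n$ to $\Lambda_\infty$ exactly, with no semicontinuity statement needed. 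To repair your argument you would either have to prove continuity of the covolume along your uniformly discrete deformation for the class of subgroups $H$ allowed by the lemma, or substitute the paper's normalizer argument for your final step.
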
 

\begin{remark}
I tried to state it in a general enough setting, so there is in the statement the two ad-hoc hypotheses I need below. It is easy to check that in the above described cases they are fulfilled. 
\end{remark}

\begin{proof}
Let $x_n=H g_n$ be a sequence of points in $\mathcal P(\rho)$ converging to $x=Hg$ in $\GG$. Suppose we made the choices such that $g_n$ converges to $g$ in $G$.

Let $A$ be a compact subset in $H$ of volume strictly greater than $\rho$. Then, by definition, for every $n$, there is an element $\gamma_n$ in $\G$ such that $A \cap g_n\gamma_n g_n^{-1}A$ is not empty. As $A$ is compact and $g_n$ tends to $g$, the choices for the $\gamma_n$'s stay inside a compact subset, hence are in finite number. So there is a fixed $\gamma\in \G$ such that for infinitely many $n$, the intersection $A \cap g_n\gamma g_n^{-1}A$ is not empty. Conclusion: $A \cap g\gamma g^{-1}A$ is not empty and $g\G g^{-1}\cap H$ is a lattice in $H$ of covolume at most $\rho$.

We now prove that $g\G g^{-1}\cap H$ effectively has covolume $\rho$ in $H$. We even prove the  stronger fact: the sequence of subgroups $\G \cap g_n^{-1} H g_n$ is a stationnary sequence. Hence for $n$ and $m$ big enough, $g_n^{-1} g_m$ normalizes $H$ and let its Haar measure invariant. The subgroup of the normalizer of $H$ letting its Haar measure invariant is closed, so $g_n^{-1} g$ belongs to it, thus proving that $g\G g^{-1} \cap H$ is of covolume $\rho$.

As $g\G g^{-1}\cap H$ is finitely generated, we just have to show that for any $\gamma \in \G$, if $g^{-1} \gamma g$ is in $H$, then $g_n^{-1} \gamma g_n$ is in $H$ for $n$ big enough. So let $A$ be a compact subset in $H$ of positive measure $\alpha$ such its images under $g^{-1}\G g\cap H$ are disjoints. And let $A'$ be of the form $A'=\cup_{i=0}^k g^{-1}\gamma gA$, where $k$ is bigger than $\frac{\rho}{\alpha}$. Then for all $n$, there exist a $\gamma_n\in \G\cap g_n^{-1}Hg_n$ such that, $g_n \gamma_n g_n^{-1}A'\cap A'$ is not empty. As before, there is only a finite number of possibilities for $\gamma_n$, hence it takes some value $\gamma'$ infinitely many times. Therefore $g \gamma' g^{-1} A' \cap A'$ is not empty. By construction, $\gamma'$ is a power $\gamma^k$ of $\gamma$, and for infinitely many $n$, $g_n\gamma^k g_n^{-1}$ belongs to $H$. Now the hypothesis on $H$ shows that $g_n\gamma g_n^{-1}$ also belongs to $H$.

At this point we showed that for any $\gamma$ in $\G\cap g^{-1}H g$, there is an infinite number of $n$ such that $\gamma$ belongs to $\G\cap g_n^{-1}H g_n$. Using this fact along any subsequence, it shows that for $n$ big enough, $\gamma$ belongs to $\G\cap g_n^{-1}H g_n$. And for $n$ big enough, $\G\cap g_n^{-1}H g_n$ contains all the generators of $\G\cap g^{-1}H g$. Hence for $n$ big enough, the subgroups $\G\cap g_n^{-1}H g_n$ and $\G\cap g^{-1}H g$ are the same one. This concludes the proof of this lemma.
\end{proof}

\bibliographystyle{amsplain}
\bibliography{diophuni}
\end{document}